\newtheorem{theorem}{Theorem}
\title{\large Stabilization of Crystallization Models \\ Governed by Hyperbolic Systems}
\author{Alexander Zuyev\thanks{
Max Planck Institute for Dynamics of Complex Technical Systems, Magdeburg, Germany
({\tt\small zuyev@mpi-magdeburg.mpg.de, benner@mpi-magdeburg.mpg.de}) \newline
$^{**}$Institute of Applied Mathematics and Mechanics, National Academy of Sciences of Ukraine, Slovyansk
        \newline
} $^{, **}$ and Peter Benner$^{*}$}
\date{}
\begin{document}

\maketitle
\thispagestyle{empty}

\begin{abstract}
This paper deals with mathematical models of continuous crystallization described by hyperbolic systems of partial differential equations coupled with ordinary and integro-differential equations. The considered systems admit nonzero steady-state solutions with constant inputs.
To stabilize these solutions, we present an approach for constructing control Lyapunov functionals based on quadratic forms in weighted $L^2$-spaces.
It is shown that the proposed control design scheme guarantees exponential stability of the closed-loop system.
\end{abstract}

\section{Introduction}

The study of the literature in the field of mathematical control theory for distributed parameter systems shows that the development of control design techniques
is to a considerable extent influenced by problems of chemical engineering.
Important examples in this area come from mathematical models of distillation, chromatography, and crystallization processes governed by hyperbolic systems of partial differential equations~\cite{M1990,RL1988,R2000,GK2004}. For a moving bed chromatography with considerable apparent dispersion coefficients, a parabolic-type equilibrium dispersive model is also available for theoretical studies (cf.~\cite{GFS2006}).

The main challenge concerning applications of Lyapunov's direct method to quasilinear hyperbolic systems is related to the construction of a Lyapunov functional with negative definite time derivative.
For a class of hyperbolic systems with boundary control, strict control Lyapunov functionals have been proposed in~\cite{BC2016}.
The construction of these functionals requires that the solution of an associated ordinary differential equation should be defined on a prescribed interval.
The proposed method has been applied, in particular, to stabilize the equilibrium of the Saint-Venant equations represented as a 2x2 hyperbolic system. An important feature of this approach relies on the possibility of studying control systems with non-uniform steady states.

It should be mentioned that the backstepping approach~\cite{K1992} has been already applied for solving the stabilization problem for several classes of distributed parameter systems~\cite{KS2008}.
In particular, this approach has been developed in~\cite{LB2015} for problems of trajectory generation and tracking for linear 2x2 hyperbolic systems of partial differential equations with boundary inputs and outputs.
In the paper~\cite{DK2019}, the backstepping approach is applied to the output regulation problem for a class of coupled linear parabolic  integro-differential equations.
To the best of our knowledge, this design methodology has not been applied for the exponential stabilization of integro-differential models of cooling and preferential crystallization so far.

The flatness based approach~\cite{F1995} is shown to be a powerful method for
nonlinear models of chemical engineering with known flat output (see, e.g.,~\cite{RRZ1996}).
This approach is also applicable for the trajectory tracking problem of distributed parameter systems with integral terms,
including a class of parabolic-like linear Volterra partial integro-differential equation with boundary control~\cite{M2016}.
However, the question of checking flatness for general classes of systems and constructing a flat output remains open up to now.

Although there are well-established control design techniques for hyperbolic systems with boundary controls~\cite{BC2016},
mathematical models of crystallization processes require the analysis of coupled systems of first-order quasilinear partial and ordinary differential equations with integral terms.
An solution of the local steering problem for a finite-dimensional nonlinear crystallization model
has been proposed in~\cite{ZB2016} by exploiting the Lie bracket approximation techniques with open-loop controls (cf.~\cite{ZG2017}).
For infinite-dimensional crystallization models, the design of stabilizing feedback laws remains an open problem.
Our paper aims at solving this problem for the classes of continuous crystallization models introduced in~\cite{VR2001} and~\cite{Q2008}.

\section{Continuous Crystallization Model}\label{cryst1d}
Consider a continuous cooling crystallization model described by the population balance and mass balance equations as follows~\cite{VR2001}:
\begin{equation}\label{PB}
\begin{aligned}
& \frac{\partial n(x,t)}{\partial t} + G(x,c)\frac{\partial n(x,t)}{\partial x} = {v} \psi(x)n(x,t),\quad x\in [0,\ell], \\
&
n(0,t) = B(c)/G(0,c),\;\quad G>0,\;B\ge 0,
\end{aligned}
\end{equation}
\begin{equation}\label{MB}
\begin{aligned}
\frac{dc}{dt} &= (\rho_0-c)\left(v+\frac{d \ln\varepsilon(n(\cdot,t))}{dt}\right)\\
& +\frac{v}{\varepsilon(n(\cdot,t))}\left(u_f -\rho_0-\rho_0 k_v\int_0^\ell \phi(x)n(x,t)dx\right), \\
& \varepsilon(n(\cdot,t))=1-k_v\int_0^\ell x^3 n(x,t)dx>0.
\end{aligned}
\end{equation}
Here, the crystal size distribution function $n(x,t)\in {\mathbb R}^+=[0,+\infty)$ denotes the expected number of crystals of size $x\in [0,\ell]$ at time $t\ge 0$.
Equation~\eqref{MB} relates the solid phase with
the mass concentration of solute $c=c(t)\ge 0$ in the liquid phase, where $\rho_0>0$ is the crystal density, $v>0$ is the flow-rate parameter, and $\varepsilon (n(\cdot,t))$ is the void fraction.
The crystallization process is controlled by the mass concentration of the solute in the feed $u_f\ge 0$.
In this paper, we allow the growth rate $G(x,c)$ to depend on the crystal size.
We refer the reader to~\cite{VR2001} for information about the nucleation rate $B(c)$,  classification functions $\psi(x)$, $\phi(x)$, and the volumetric shape factor $k_v$.
The functions $G(x,c)$ and $B(c)$ are assumed to be continuously differentiable in their domains of definition, while $\psi(x)$ and $\phi(x)$ are piecewise continuous.

Equations~\eqref{PB} and~\eqref{MB} admit the steady-state solution $n(x,t)=\bar n(x)$ and $c(t)=\bar c$ with a constant control $u_f = \bar u_f$, where
\begin{equation}\label{S}
\begin{aligned}
\bar n(x)&= \frac{B(\bar c)}{G(0,\bar c)} \exp \left\{v \int_0^x \frac{\psi(y) dy}{G(y,\bar c)}\right\},\quad x\in [0,\ell],\\
\bar c & = \rho_0 + \frac{1}{\varepsilon(\bar n(\cdot))}\left(\bar u_f - \rho_0 - \rho_0 k_v\int_0^\ell \phi(x)\bar n(x) dx \right).
\end{aligned}
\end{equation}
Our goal is to stabilize the above equilibrium by a state feedback law. By performing the change of variables
$$
\begin{aligned}
& n(x,t) = \bar n(x)+w(x,t),\\
& c(t)=\bar c+s(t),\\
& u_f=\bar u_f + u,
\end{aligned}
$$
we represent the linear approximation of~\eqref{PB} and~\eqref{MB} in a neighborhood of~\eqref{S} as follows:
\begin{equation}\label{lambda}
\begin{aligned}
&\frac{\partial w(x,t)}{\partial t} = -g(x) w'(x,t)+ {v} \psi(x)w(x,t)-g_c(x) \bar n'(x) s(t),\quad x\in [0,\ell],
 \\
& w(0,t) = \alpha s(t), \\
& \frac{ds(t)}{dt} = -k_0 s(t) + k_1 w(\ell,t)+ \int_0^\ell \theta (x) w(x,t)dx + b u ,
\end{aligned}
\end{equation}
where the prime stands for the derivative with respect to $x$,
\begin{equation}
\begin{aligned}
& g(x)=G(x,\bar c),\; g_c(x) = \left. \frac{\partial G(x,c)}{\partial c}\right|_{c=\bar c},\;\alpha = \frac{d}{dc}\left.\left(\frac{B(c)}{G(0,c)}\right)\right|_{c=\bar c}, \\
& k_0 = v+\frac{(\bar c - \rho_0)k_v}{\varepsilon(\bar n)}\int_0^\ell x^3 g_c \bar n' dx,\; k_1 = \frac{(\rho_0-\bar c)k_v \ell^3}{\varepsilon(\bar n)}g(\ell),\\
& \theta(x) = \frac{k_v}{\varepsilon (\bar n)}\left\{(\bar c-\rho_0) \bigl((x^3g(x))'+vx^3\psi(x)\bigr)-v\rho_0\phi(x)+ v\beta x^3 \right\},\\
& \beta = \bar u_f - \rho_0-\rho_0 k_v \int_0^\ell \phi(x) \bar n(x) dx,\; b=\frac{v}{\varepsilon(\bar n)}.
\end{aligned}
\label{cs_coeff}
\end{equation}
Note that the coefficients and parameters of~\eqref{lambda} satisfy the following inequalities for the realistic crystallization example considered in~\cite{VR2001}:
$$
\rho_0>\bar c>0,\;k_0>0,\;k_1>0,\; \alpha>0,\; b>0,\; g>0,\; g_c >0,\; \psi\le 0.
$$
Moreover, the growth rate $G$ is independent of $x$ and affine in $c$ for the example of~\cite{VR2001}.

\section{Control Design}
Consider a control Lyapunov functional candidate
\begin{equation}
V = \frac12\int_0^\ell \rho(x){w}^2(x,t)dx + \frac{\gamma}{2} s^2(t),
\label{V}
\end{equation}
where $\rho(x)>0$ is a continuous density function to be defined later, and $\gamma$ is a positive constant.
The time derivative of $V$ along the classical solutions of~\eqref{lambda} takes the form
\begin{equation}\label{Vdot}
\begin{aligned}
\dot V = & \frac{1}{2}\int_0^\ell \left\{ (\rho g)' + 2v \rho \psi\right\} w^2 dx -\left.\frac{\rho g w^2}{2}\right|_{x=\ell}  -\left(\gamma  k_0- \frac{\rho(0)g(0)\alpha^2}{2}
\right)s^2 + \gamma b s u
\\
& + s\left(\int_0^\ell (\gamma \theta - \rho g_c \bar n')w\, dx + \gamma k_1 w|_{x=\ell}\right).
\end{aligned}
\end{equation}
The above formula is obtained by performing the integration by parts with regard to the boundary condition $w(0,t) = \alpha s$.
Note that by constructing the Lyapunov functional~\eqref{V} we aim to achieve strong stability in the corresponding weighted $L^2$-space.
A weaker stability notion with respect to some integral measure has been analyzed in the paper~\cite{PK2014} for a population balance model,
which is relevant to the stability problem with respect to two measures (cf.~\cite{MS2008}) or partial stability concept~\cite{Z2015,Z2000}.

It will be shown in the sequel that $\dot V$ can be made negative definite in an appropriate state space with the following feedback law:
\begin{equation}
u = -\frac{1}{\gamma b} \left(\varkappa s + \int_0^\ell (\gamma \theta - \rho g_c \bar n')w\, dx + \gamma k_1 w|_{x=\ell}\right),
\label{feedback1}
\end{equation}
where $\varkappa\in \mathbb R$ is a design parameter.
To answer the question whether the proposed feedback control~\eqref{feedback1} stabilizes the trivial solution of~\eqref{lambda},
we take the density function $\rho(x)>0$ as a solution of the ordinary differential equation
\begin{equation}\label{ODE}
\frac{d}{dx}(\rho(x) g(x)) + 2v \psi(x)\rho(x) = -h(x) \rho(x) \quad x\in [0,\ell],
\end{equation}
with some continuous function $h(x)>0$ to be defined on $[0,\ell]$.
The above equation is a particular case of the differential inequality proposed in~\cite{ZKB2017}.

Straightforward computations show that the general solution of~\eqref{ODE} is
\begin{equation}
\rho(x) = \bar \rho\exp\left\{-\int_0^x \frac{2v\psi(y)+g'(y)+h(y)}{g(y)}dy\right\},\quad \bar \rho>0.
\label{rho}
\end{equation}
Then the substitution of formulas~\eqref{feedback1} and~\eqref{ODE} into~\eqref{Vdot}  yields the time derivative of $V$ along the trajectories of the closed-loop system:
\begin{equation}
\dot V = -\frac{1}{2}\int_0^\ell\rho h w^2 dx -\left.\frac{\rho g w^2}{2}\right|_{x=\ell}  -\left(\varkappa+\gamma  k_0- \frac{\rho(0)g(0)\alpha^2}{2}
\right)s^2.
\label{Vdot_cs}
\end{equation}

\section{Stability Analysis}
To analyze stability properties of the above control system, we first perform the change of variables
$$
w(x,t) = \tilde w(x,t) + \alpha s(t).
$$
This allows to rewrite~\eqref{lambda} as a system with zero boundary condition at $x=0$:
\begin{equation}\label{lambda0}
\begin{aligned}
\frac{\partial \tilde w}{\partial t}= &- g{\tilde w}'+{v} \psi \tilde w - \alpha\int_0^\ell \theta(y)\tilde w(y,t)dy-\alpha k_1 \tilde w(\ell,t)\\
 &+ \left(\alpha k_2 + \alpha v \psi - g_c \bar n'\right)s-\alpha b u,\;\; x\in [0,\ell], \\
 \tilde w|_{x=0}   &=  0, \\
 \frac{ds}{dt} = & - k_2 s + k_1 \tilde w|_{x=\ell}+ \int_0^\ell \theta (x) \tilde w(x,t)dx + b u,
\end{aligned}
\end{equation}
where
$$
k_2 =k_0 -\alpha k_1 - \alpha\int_0^\ell \theta(x)dx.
$$

Let the function $\rho\in C^1[0,\ell]$ be defined by~\eqref{rho}, and let $L_\rho^2(0,\ell)$ denote the weighted $L^2$-space such that the inner product of $\eta_1,\eta_2\in L_\rho^2(0,\ell)$ is given by
$$
\left<\eta_1,\eta_2\right>_{L^2_\rho(0,\ell)} = \int_0^\ell \eta_1(x)\eta_2(x)\rho(x)dx.
$$
We also introduce the linear space
$$
H= \left\{\left.\xi=\begin{pmatrix}\eta \\ s\end{pmatrix}\,\right|\, \eta\in L^2_\rho(0,\ell),\; s\in{\mathbb R} \right\}
$$
with the following inner product of elements $\xi_1= \begin{pmatrix}\eta_1 \\ s_1\end{pmatrix}\in H$ and $\xi_2= \begin{pmatrix}\eta_2 \\ s_2\end{pmatrix}\in H$:
$$
\left<\xi_1,\xi_2\right>_H= \left<\eta_1+\alpha s_1,\eta_2+\alpha s_2\right>_{L^2_\rho(0,\ell)} + \gamma s_1 s_2.
$$
It is easy to see that $H$ is a Hilbert space if $\gamma>0$ .

Then system~\eqref{lambda0} can be represented as the abstract differential equation
\begin{equation}\label{cs_L2}
\frac{d}{dt}\xi(t) = A \xi(t) + B u,\quad \xi(t)\in H,\; u\in \mathbb R,
\end{equation}
with the unbounded linear operator $A:D(A)\to H$ defined by
\begin{equation}
D(A)=\left\{\left.\xi=\begin{pmatrix}\eta \\ s\end{pmatrix}\in H \,\right|\, \eta\in H^1(0,\ell),\; \eta(0)=0 \right\},
\label{A_op}
\end{equation}
$$\small
\xi=\begin{pmatrix}\eta \\ s\end{pmatrix} \mapsto A\xi = \begin{pmatrix}-g \eta' + v\psi \eta - \alpha \int_0^\ell \theta(y)\eta(y)dy - \alpha k_1 \eta(\ell) + s(\alpha k_2 + \alpha v \psi - g_c \bar n')\\
- k_2 s + k_1 \eta(\ell)+ \int_0^\ell \theta (y) \eta(y)dy
 \end{pmatrix},
$$
and
\begin{equation}
B = \begin{pmatrix} -\alpha b \\ b\end{pmatrix}\in H.
\label{B_op}
\end{equation}
Here $H^1(0,\ell)$ denotes the Sobolev space.

The feedback law~\eqref{feedback1} can be written in the operator form as
\begin{equation}
u=K\xi,
\label{K_op}
\end{equation}
where the linear functional $K:D(K)\subset H \to {\mathbb R}$ acts as
$$
\xi=\begin{pmatrix}\eta \\ s\end{pmatrix} \mapsto K\xi = -\frac{1}{\gamma b} \left(\varkappa s + \int_0^\ell (\gamma \theta(x) - \rho(x) g_c(x) \bar n'(x))\eta(x) dx + \gamma k_1 \eta(\ell)\right).
$$
We formulate the main stability result for the closed-loop system~\eqref{cs_L2},~\eqref{K_op} as follows.

\begin{theorem}\label{thm1}
Let the linear operator $\tilde A:D(A)\to H$ be defined as $\tilde A = A+BK$, where $A$, $B$, and $K$ are given by~\eqref{A_op},~\eqref{B_op}, and~\eqref{K_op}, respectively. Assume, moreover, that the function $\rho\in C^1[0,\ell]$ is defined by~\eqref{rho} with some $h\in C[0,\ell]$ and
\begin{equation}
\bar \rho>0,\; \gamma>0,\; \varkappa > \frac{\bar\rho g(0)\alpha^2}{2}-\gamma  k_0,\; g(\ell)>0,\; h(x)>0\;\;{\text for all}\;x\in [0,\ell].
\label{stabcond}
\end{equation}
Then the abstract Cauchy problem
\begin{equation}
\begin{aligned}
& \frac{d}{dt} \xi(t) = \tilde A \xi(t),\quad t\ge 0,\\
& \xi(0)=\xi_0 \in H,
\end{aligned}
\label{Cauchy_op}
\end{equation}
is well posed (in the sense of mild solutions), and the trivial solution of~\eqref{Cauchy_op} is exponentially stable, i.e.
\begin{equation}
\|\xi(t)\|_H \le \|\xi_0\|_H e^{-\omega t} \quad \text{for all}\;\;\xi_0\in H,\; t\ge 0,
\label{exp_op}
\end{equation}
with some $\omega>0$.
\end{theorem}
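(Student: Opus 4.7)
The plan is to verify the hypotheses of the Lumer--Phillips theorem for the shifted operator $\tilde A+\omega I$ on the Hilbert space $H$. The crucial observation is that, after the change of coordinates $w=\tilde w+\alpha s$, the Lyapunov candidate~\eqref{V} coincides with $\tfrac{1}{2}\|\xi\|_H^2$, so the identity~\eqref{Vdot_cs} computed in the original coordinates is precisely the quadratic form $\langle \tilde A\xi,\xi\rangle_H$ evaluated on classical solutions. Under the hypotheses~\eqref{stabcond}, the bulk term $-\tfrac{1}{2}\int_0^\ell\rho h\,w^2\,dx$ is bounded above by $-\tfrac{h_{\min}}{2}\int_0^\ell\rho\,w^2\,dx$ with $h_{\min}=\min_{[0,\ell]}h>0$, the boundary contribution at $x=\ell$ is nonpositive, and the $s^2$-coefficient is strictly negative. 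Expanding $w=\tilde w+\alpha s$ in the weighted norm and absorbing the cross term with Cauchy--Schwarz then yields
$$
\langle\tilde A\xi,\xi\rangle_H \le -\omega\|\xi\|_H^2 \qquad\text{for all } \xi\in D(A),
$$
with some $\omega>0$ determined by $h_{\min}$, $\gamma$, and the positive margin $\varkappa+\gamma k_0-\rho(0)g(0)\alpha^2/2$.

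The nontrivial step is the range condition: for some $\lambda>\omega$ one must show that $\lambda I-\tilde A:D(A)\to H$ is surjective. Given $F=(f,q)\in H$, the equation $(\lambda I-\tilde A)\xi=F$ reduces to the first-order linear ODE
$$
g(x)\eta'(x)+(\lambda-v\psi(x))\eta(x)=f(x)+\Phi\bigl(x;\,s,\eta(\ell),\mu_1,\mu_2\bigr),\quad \eta(0)=0,
$$
with $\mu_1=\int_0^\ell\theta\eta\,dy$ and $\mu_2=\int_0^\ell\rho g_c\bar n'\eta\,dy$ collecting the nonlocal contributions from $A$ and $BK$, coupled with a scalar equation for $s$. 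Treating $(s,\eta(\ell),\mu_1,\mu_2)$ as parameters, the ODE is solved explicitly by variation of constants; substituting the resulting $\eta$ back into the four defining relations produces a $4\times 4$ linear system whose coefficient matrix approaches an invertible diagonal limit as $\lambda\to\infty$, thanks to the exponential decay factor $\exp(-\lambda\int_0^x dy/g(y))$. Hence the system is uniquely solvable for all sufficiently large $\lambda$, yielding the desired $\xi\in D(A)$. This is the step I expect to be the main obstacle, because $\tilde A$ couples a non-$L^2$-bounded boundary trace $\eta\mapsto\eta(\ell)$ with two integral functionals, all fed back into the scalar dynamics through~\eqref{K_op}, so the asymptotic analysis of the $4\times 4$ determinant has to be carried out carefully to rule out a spurious singularity.

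With dissipativity of $\tilde A+\omega I$ and the range condition established, the Lumer--Phillips theorem (together with the obvious density of $D(A)$ in $H$, since $\{\eta\in H^1(0,\ell):\eta(0)=0\}\times\mathbb R$ is dense in $L^2_\rho(0,\ell)\times\mathbb R$) implies that $\tilde A$ generates a $C_0$-semigroup $\{T(t)\}_{t\ge 0}$ on $H$ satisfying $\|T(t)\|_H\le e^{-\omega t}$. Consequently, the Cauchy problem~\eqref{Cauchy_op} is well posed in the sense of mild solutions $\xi(t)=T(t)\xi_0$, and the exponential estimate~\eqref{exp_op} follows immediately from the semigroup bound. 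The conversion of the formal energy identity~\eqref{Vdot_cs} into a rigorous inequality on $D(A)$ and the closedness of $\tilde A$ are routine once the preceding two steps are in place.
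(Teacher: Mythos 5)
Your proposal is correct and follows essentially the same route as the paper: the same dissipativity identity for $\left<\xi,\tilde A \xi\right>_H$ (the paper's~\eqref{diss_ineq}), the Lumer--Phillips theorem, and the strict negativity margin provided by~\eqref{stabcond} to extract the decay rate, noting as you do that $\tfrac12\|\xi\|_H^2$ is exactly the Lyapunov functional~\eqref{V} in the shifted coordinates. The only differences are minor: you fold the rate $\omega$ into Lumer--Phillips by shifting the operator, whereas the paper first obtains a contraction semigroup and then applies Gr\"onwall--Bellman to $V(\xi(t))$, and you spell out the variation-of-constants reduction to a $4\times 4$ linear system for the range condition, a step the paper dismisses with ``it can also be shown.''
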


\begin{proof}
A straightforward computation shows that
\begin{equation}
\begin{aligned}
\left<\xi,\tilde A \xi\right>_H=&
-\frac{1}{2}\int_0^\ell\rho(x) h(x) \eta^2(x) dx -\frac{\rho(\ell) g(\ell) \eta^2(\ell)}{2} \\
& -\left(\varkappa+\gamma  k_0- \frac{\bar\rho g(0)\alpha^2}{2}
\right)s^2,
\end{aligned}
\label{diss_ineq}
\end{equation}
for all $\xi$ from the dense set $D(\tilde A)=D(A)\subset H$.
If the conditions~\eqref{stabcond} hold then $\left<\xi,\tilde A \xi\right>_H\le 0$ for all $\xi\in D(\tilde A)$, which proves that the operator $\tilde A$ is dissipative in~$H$.
It can also be shown that $\tilde A$ is closed, and $A - \lambda I$ is surjective for $\lambda>0$.
Hence, $\tilde A$ generates the $C_0$-semigroup of contractions $\{e^{t\tilde A}\}_{t\ge 0}$ on $H$ by the Lumer--Phillips theorem (cf.~\cite{P1983,B2010}).
The Cauchy problem~\eqref{Cauchy_op} is thus well-posed on $t\ge 0$, and its mild solutions are defined by
$$
\xi(t)= e^{t\tilde A}\xi_0,\quad \xi_0 \in H,\; t\ge 0.
$$

To prove the exponential decay estimate~\eqref{exp_op}, we analyze the behavior of
$$
V(\xi(t)) = \frac{1}{2}\|\xi(t)\|^2_H
$$
along the solutions of~\eqref{Cauchy_op}.
The above $V(\xi(t))$ plays the same role for the abstract problem~\eqref{Cauchy_op} as the Lyapunov functional~\eqref{V} for the closed-loop system~\eqref{lambda},~\eqref{feedback1}.

If $\xi(t)$ is a classical solution of~\eqref{Cauchy_op} (i.e. $\xi(t)\in D(\tilde A)$ for all $t\ge 0$), then $\frac{d}{dt}V(\xi(t))=\left<\xi(t),\tilde A \xi(t)\right>_H\le 0$. Moreover, the quadratic functional~\eqref{diss_ineq} is negative definite with respect to the norm $\|\cdot \|_H$ if the conditions~\eqref{stabcond} are satisfied, which means that
\begin{equation}\label{comparison_ineq}
\frac{d}{dt}V(\xi(t))=\left<\xi(t),\tilde A \xi(t)\right>_H\le -\frac{\delta}2 \|\xi(t) \|_H^2 = - \delta V(\xi(t)) \;\;\text{for}\;\; \xi(t)\in D(\tilde A)
\end{equation}
with some constant $\delta>0$.
Then~\eqref{exp_op} follows from~\eqref{comparison_ineq} and the Gr{\"o}nwall--Bellman inequality with $\omega=\delta/2>0$.
\end{proof}

\section{Preferential Crystallization Model}
Consider the $2x2$ hyperbolic system with one spatial variable that describes the preferential crystallization of enantiomers~\cite{Q2008,ZB2018}:
\begin{equation}\label{control2x2}
\begin{aligned}
&\frac{\partial n_k(x,t)}{\partial t} + G_k(S_k) \frac{\partial n_k(x,t)}{\partial x} = \psi(x) n_k(x,t),\quad x\in [0,\ell],\;t\ge 0, \\
&
\left. G_k (S_k)n_k \right|_{x=0} = B_k(S_k),\quad k=1,2,
\end{aligned}
\end{equation}
where $n_1(x,t)\ge 0$ and $n_2(x,t)\ge 0$ are the crystal size distributions for the preferred and counter enantiomers, respectively.
Here $G_k: [1,+\infty)\to {\mathbb R}^+$ characterizes the growth rate of crystals and $B_k:[1,+\infty)\to {\mathbb R}^+$ describes the nucleation rate of particles of minimum size for the $k$-th enantiomer.
These functions depend on the relative supersaturations $S_1\ge 1$ and $S_2\ge 1$ of the preferred and counter enantiomers,
which are mutually controlled by using the balance between the incoming and outgoing mass fluxes in the liquid phase.
It is assumed that $B_k$ and $G_k$ are differentiable and strictly increasing functions in their domain of definition such that $B_k(1)=0$ and $G_k(0)=0$ for $k=1,2$.
The classification function $\psi(x)$ describing the dissolution of particles below some critical values is assumed to be piecewise continuous on~$[0,\ell]$.

It is easy to see that system~\eqref{control2x2} with $ S_k = \bar S_k={\rm const}>1$ has the equilibrium $n_k(x,t)=\bar n_k(x)$,
\begin{equation}
\bar n_k(x)= \frac{\bar B_k}{\bar G_k}\exp\left\{\frac{1}{\bar G_k}\int_0^x \psi(y)dy \right\}, \quad k=1,2,
\label{equilibrium2x2}
\end{equation}
where
$$
\bar B_k = B_k(\bar S_k)>0,\;\bar G_k = G_k(\bar S_k)>0.
$$

To study the crystallization dynamics in a neighborhood of the steady state~\eqref{equilibrium2x2},
we rewrite system~\eqref{control2x2} with respect to $w_k(x,t)=n_k(x,t)-\bar n_k(x)$
as follows:
\begin{equation}
\begin{aligned}
& \frac{\partial  w_k(x,t)}{\partial t} = - (\bar G_k+\Delta G_k) \frac{\partial  w_k(x,t)}{\partial x} + \psi(x)  w_k(x,t)-\frac{\Delta G_k}{\bar G_k}\psi(x)\bar n_k(x), \\
& \left. (\bar G_k + \Delta G_k) { w_k} \right|_{x=0} = \Delta B_k - \Delta G_k \bar B_k/{\bar G_k},\quad k=1,2,
\end{aligned}
\label{sys2x2var}
\end{equation}
where $\Delta B_k = B_k-\bar B_k$ and $\Delta G_k = G_k-\bar G_k$.
Note that the deviations $\Delta B_k$ and $\Delta G_k$ cannot be controlled independently,
as the growth and nucleation rates of both enantiomers mutually depend on mass fractions in the liquid phase.
Following the approach of~\cite{ZB2018}, we introduce a scalar variable $v$ that characterizes the deviation of relative saturations from their steady-state values and assume that
\begin{equation}
\begin{aligned}
 \Delta G_k/{\bar G_k} & = g_k v + o(|v|),\\
\Delta B_k/{\bar B_k} & = b_k v+o(|v|),
\end{aligned}
\label{bkgk}
\end{equation}
for small values of $v$. Thus the approximation of system~\eqref{sys2x2var} takes the form
\begin{equation}
\begin{aligned}
 \frac{\partial  w_k(x,t)}{\partial t} &= - \bar G_k(1+g_k v) \frac{\partial  w_k(x,t)}{\partial x} + \psi(x) w_k(x,t)-g_k\bar n_k(x) v,\; x\in (0,\ell),\\
 { w_k}|_{x=0} &= \alpha_k v,\quad k=1,2,
\end{aligned}
\label{BC-perturbed}
\end{equation}
where terms of order $o(|v|)$ are neglected and
\begin{equation}
\alpha_k=(b_k-g_k){\bar B_k}/{\bar G_k}.
\label{alphak}
\end{equation}
We assume further that the rate of change of $v$ can be controlled, i.e.
\begin{equation}
\frac{dv}{dt}=u,
\label{v_eq}
\end{equation}
and $u$ is treated as the control.

In control system~\eqref{BC-perturbed},~\eqref{v_eq}, the functions $\bar n_k(x)$ are defined by~\eqref{equilibrium2x2} and the parameters $g_k$, $\alpha_k$ are expressed from~\eqref{bkgk},~\eqref{alphak} in terms of the Taylor coefficients of $B_k$ and $G_k$.

\section{Stabilization with Scalar Input}

Similarly to the crystallization model of Section~\ref{cryst1d}, we will use weighted $L^2$-norms to construct a control Lyapunov functional candidate:
\begin{equation}
W(t) = \frac{1}{2}\sum_{k=1}^2 \int_0^\ell \rho_{k} (x) w_k^2(x,t)\,dx + \frac{\gamma v^2(t)}{2},\quad \gamma>0,\;\rho_k(x)>0.
\label{CLF2}
\end{equation}
We compute the time derivative of $W$ along the classical solutions of the nonlinear control system~\eqref{BC-perturbed},~\eqref{v_eq} by exploiting the integration by parts and assuming that $w_k(0,t)=\alpha_k v$:
\begin{equation}
\dot W = \frac{1}{2}\sum_{k=1}^2\left(W_{0k}+v W_{1k}\right) + \gamma v u,
\label{Wdot}
\end{equation}
where
$$
\begin{aligned}
W_{0k} &= \int_0^\ell (\bar G_k \rho_k' + 2\rho_k \psi)w_k^2(x,t)\, dx - \bar G_k \rho_k(\ell) w_k^2(\ell,t),\\
W_{1k} &=  g_k \int_0^\ell (\bar G_k w_k \rho_k' - 2\rho_k \bar n_k)w_k\,dx + \bar G_k (1+g_k v)\rho_k(0)\alpha_k^2 v\\
& - \bar G_k g_k \rho_k(\ell) w_k(\ell,t).
\end{aligned}
$$

To derive a stabilizing control, we choose the density functions $\rho_k(x)>0$ as solutions to the following differential equations:
\begin{equation}
{\bar G_k} \rho_k'(x)= - {2\psi(x)}\rho_k(x) - h_k(x)\rho_{k}(x),\quad x\in [0,\ell],\;k=1,2.
\label{ODEs}
\end{equation}
Our main result concerning the stability of the closed-loop system under this above choice of densities $\rho_k(x)$ is summarized below.

\begin{theorem}\label{thm2x2}
Let $h_k\in C[0,\ell]$ be such that $h_k(x)\ge h_{k0}>0$, $k=1,2$,
\begin{equation}
\rho_k(x) = \bar \rho_k \exp\left\{-\frac{1}{\bar G_k}\int_0^x(2\psi(y)+h_k(y))dy\right\},\quad \bar \rho_k>0,\;x\in [0,\ell],
\label{rhok}
\end{equation}
and let
\begin{equation}
\begin{aligned}
u = &-\frac{\varkappa v}{2} +\frac{1}{2 \gamma}\sum_{k=1}^2 \bigl\{g_k \int_0^\ell (2\bar n_k + (h_k+2\psi)w_k)w_k\rho_k dx \\
& - \bar G_k(1+g_k v)\bar \rho_k \alpha_k^2 v +  \bar G_k g_k \rho_k(\ell)w_k(\ell,t) \bigr\},\quad \varkappa>0.
\end{aligned}
\label{feedback}
\end{equation}
Then the classical solutions of the closed-loop system~\eqref{BC-perturbed},~\eqref{v_eq},~\eqref{feedback} satisfy the following exponential decay estimate:
\begin{equation}
W(t)\le W(0)e^{-\omega t},\quad t \ge 0,
\label{asympt2}
\end{equation}
where $\omega = \min\{h_{10},h_{20},\varkappa\}>0$.
\end{theorem}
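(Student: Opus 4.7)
The strategy is a direct Lyapunov computation along classical trajectories: the ODE~\eqref{ODEs} defining $\rho_k$ and the feedback~\eqref{feedback} are tailored so that the right-hand side of~\eqref{Wdot} collapses into a negative-definite quadratic form in the same norm that defines $W$. Since only classical solutions are treated, no semigroup machinery is needed (in contrast with Theorem~\ref{thm1}), and the Gr\"onwall--Bellman inequality is the only analytic tool required.

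First I substitute the identity $\bar G_k\rho_k'+2\psi\rho_k=-h_k\rho_k$ from~\eqref{ODEs} into $W_{0k}$; the volume integrand collapses and
$$W_{0k}=-\int_0^\ell h_k(x)\rho_k(x)w_k^2(x,t)\,dx-\bar G_k\rho_k(\ell)w_k^2(\ell,t).$$
Applying the same identity inside $W_{1k}$ yields $\bar G_k\rho_k' w_k-2\rho_k\bar n_k=-(h_k+2\psi)\rho_k w_k-2\rho_k\bar n_k$, whence
$$-W_{1k}=g_k\int_0^\ell\bigl(2\bar n_k+(h_k+2\psi)w_k\bigr)w_k\rho_k\,dx-\bar G_k(1+g_kv)\bar\rho_k\alpha_k^2 v+\bar G_k g_k\rho_k(\ell)w_k(\ell,t),$$
where I used $\rho_k(0)=\bar\rho_k$, which follows from~\eqref{rhok}. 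This is precisely the bracketed expression defining $u$ in~\eqref{feedback}, so $\gamma v u=-\tfrac{\varkappa\gamma}{2}v^2-\tfrac{v}{2}\sum_{k=1}^2 W_{1k}$. The mixed terms in~\eqref{Wdot} therefore cancel, and
$$\dot W=\frac{1}{2}\sum_{k=1}^2 W_{0k}-\frac{\varkappa\gamma}{2}v^2=-\frac{1}{2}\sum_{k=1}^2\left(\int_0^\ell h_k\rho_k w_k^2\,dx+\bar G_k\rho_k(\ell)w_k^2(\ell,t)\right)-\frac{\varkappa\gamma}{2}v^2.$$

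Finally, the boundary contributions $-\tfrac{1}{2}\bar G_k\rho_k(\ell)w_k^2(\ell,t)$ are nonpositive because $\bar G_k,\rho_k(\ell)>0$, and the lower bound $h_k(x)\ge h_{k0}$ gives $-\tfrac{1}{2}\int_0^\ell h_k\rho_k w_k^2\,dx\le-h_{k0}\cdot\tfrac{1}{2}\int_0^\ell\rho_k w_k^2\,dx$. Combining these estimates with $-\tfrac{\varkappa\gamma}{2}v^2\le-\varkappa\cdot\tfrac{\gamma v^2}{2}$ delivers $\dot W(t)\le-\omega W(t)$ with $\omega=\min\{h_{10},h_{20},\varkappa\}>0$, and~\eqref{asympt2} then follows from the Gr\"onwall--Bellman inequality. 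The one genuine piece of bookkeeping is the identity $-W_{1k}=$\,(bracket in $u$) in the second step, where the particular form of the feedback is vindicated; this is a direct algebraic verification (one only has to keep track of the powers of $v$, including the $v^2$ term carried by $1+g_kv$) and is not a real obstacle.
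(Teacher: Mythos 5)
Your proposal is correct and follows essentially the same route as the paper: substitute the ODE \eqref{ODEs} into $W_{0k}$ and $W_{1k}$, observe that the bracket in \eqref{feedback} is exactly $-W_{1k}$ (with $\rho_k(0)=\bar\rho_k$) so the cross terms cancel, arrive at $\dot W=-\frac12\sum_k\bigl(\int_0^\ell h_k\rho_k w_k^2\,dx+\bar G_k\rho_k(\ell)w_k^2(\ell,t)\bigr)-\frac{\gamma\varkappa}{2}v^2$, and conclude via $\dot W\le-\omega W$ and Gr\"onwall. You merely spell out the algebraic cancellation that the paper leaves implicit.
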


\begin{proof}
It is easy to see that the functions $\rho_k(x)$ defined by~\eqref{rhok} are general solutions of~\eqref{ODEs}.
Then we transform formula~\eqref{Wdot} by expressing the control $u$ from~\eqref{feedback} and the derivatives of $\rho_k$ from~\eqref{ODEs}.
As a result, the time derivative of $W$ along the trajectories of the closed-loop system~\eqref{BC-perturbed},~\eqref{v_eq},~\eqref{feedback} reads as follows:
$$
\dot W = -\frac{1}{2}\sum_{k=1}^2 \left(\int_0^\ell \rho_k h_k w_k^2 dx + \bar G_k \rho_k(\ell)w_k^2(\ell,t)\right)-\frac{\gamma \varkappa}{2}v^2.
$$
Then
$$
\dot W \le -\min\{h_{10},h_{20},\varkappa\} W,
$$
which proves the estimate~\eqref{asympt2}.
\end{proof}

\section{Conclusions}
The main theoretical contribution of this paper provides explicit control design schemes for the stabilization of the continuous crystallization model (Theorem~\ref{thm1}) and preferential crystallization of enantiomers (Theorem~\ref{thm2x2}).
While stability with respect to some integral measure of a population balance model was already analyzed in the paper~\cite{PK2014},
our results are based on the construction of quadratic Lyapunov functionals to achieve strong stability in the corresponding $L^2$-spaces.
The efficiency of the proposed controllers remains to be verified by numerical simulations and possible future experimental work.

\end{document}